\title[Brill-Noether loci on Enriques surfaces]{Brill-Noether loci on an Enriques surface covered by a Jacobian Kummer surface}
\author[I. Macías Tarrío, C. Spiridon, A. Stoenicǎ]{I. Macías Tarrío, C. Spiridon, A. Stoenicǎ}
\address{I. Macías Tarrío: Facultat de Matem\`atiques i Inform\`atica,
Departament de Matem\`atiques i Inform\`atica,\newline Gran Via de les Corts Catalanes
585, 08007 Barcelona, SPAIN} 
\email{irene.macias@ub.edu}
\address{C. Spiridon: Simion Stoilow Institute of Mathematics of the Romanian Academy, Research group of the project "Cohomological Hall algebras of smooth surfaces and applications" - C.F. 44/14.11.2022\newline
P.O. Box 1-764, RO-014700 Bucharest, Romania, and \newline
Faculty of Mathematics and Computer Science, University of Bucharest, Romania}
\email{cspiridon@imar.ro}
\address{A. Stoenicǎ: Faculty of Mathematics and Computer Science, University of Bucharest, Romania, and Simion Stoilow Institute of Mathematics of the Romanian Academy, Research group of the project "Cohomological Hall algebras of smooth surfaces and applications" - C.F. 44/14.11.2022} \email{andrei.stoenica@my.fmi.unibuc.ro}
\date{\today}
\subjclass{14J60, 14J26}
\let\emptyset\varnothing
\theoremstyle{plain}
\newtheorem{thm}{Theorem}[section]
\newtheorem{prop}[thm]{Proposition}
\newtheorem{cor}[thm]{Corollary}
\newtheorem{lema}[thm]{Lemma}
\theoremstyle{definition}
\newtheorem{defn}[thm]{Definition}
\newtheorem{exmp}[thm]{Example}
\newtheorem{rem}[thm]{Remark}
\newtheorem{ques}[thm]{Question}
\newcommand {\ext}{\mathrm{ext}}
\newcommand {\Sing}{\mathrm{Sing}}
\newcommand{\Pic}{\operatorname{Pic}}
\newcommand{\Supp}{\operatorname{Supp}}
\newcommand{\Hilb}{\operatorname{Hilb}}
\begin{document}

\begin{abstract}
The aim of this note is to exhibit proper first Brill-Noether loci inside the moduli spaces $M_{Y,H}(2;c_1,c_2)$ of $H$-stable rank $2$ vector bundles with fixed Chern classes of a certain type on an Enriques surface $Y$ which is covered by a Jacobian Kummer surface $X$.
\end{abstract}

\maketitle

\section{Introduction}

In the last decades, classical Brill-Noether theory, concerning line bundles on projective curves, has been extended to higher rank bundles on varieties of arbitrary dimension. In \cite{art_costa_miro-roig_2010}, Costa and Mir\'{o}-Roig introduced the Brill-Noether loci $W^k_H$, whose set theoretic support is formed by those bundles in $M_H$ having at least $k$ independent sections, where $M_H$ denotes the moduli space of $H$-stable vector bundles of some rank and some fixed Chern classes on a smooth polarized algebraic variety $(X,H)$. In a recent paper, Nugent \cite{arxiv_nugent_2024} provided a new construction of the Brill-Noether loci, generalizing the one in \cite{art_costa_miro-roig_2010}.

Standard questions of generalized Brill-Noether theory naturally arose from the ones of the classical theory and concern non-emptiness, irreducibility, dimension of the irreducible components or singularities of $W^k_H$. These problems are very far from being solved for an arbitrary variety. Little is known even in the surface case, despite the progress which has been made for the projective plane $\mathbb{P}^2$ \cite{art_gould_lee_liu_2022}, \cite{arxiv_coskun_2024}, K3 surfaces \cite{art_leyenson_2012, arxiv_leyenson_2018}, Hirzebruch surfaces \cite{art_costa_miro-roig2_2010}, ruled surfaces \cite{arxiv_irene_costa_2024} or fibered surfaces \cite{art_reyes-ahumada_roa-leguizamon_torres-lopez_2022}. We also indicate \cite{coskun_huizenga_nuer_2023} for a survey regarding higher rank Brill-Noether theory on surfaces.

In this note, we analyze the case of an Enriques surface $Y$ such that its K3 cover is a Jacobian Kummer surface and provide a way of choosing the Chern classes $c_1, c_2$ such that first Brill-Noether locus $W^1_H(2;c_1,c_2)$ is non-empty and strictly contained in the moduli space $M_{Y,H}(2;c_1,c_2)$ for any polarization $H$. The reasons for considering the above setup are a good understanding of the Picard lattice of a Jacobian Kummer surface and a theorem of F. Takemoto as reinterpreted by H. Kim \cite[Section 2]{art_kim_1998} which gives a recipe to construct rank $2$ stable vector bundles on an Enriques surface by pushing forward line bundles on its K3 cover.

The outline of this note is as follows. We start by recalling in the first part of Section \ref{section_Preliminaries} the basics of higher rank Brill-Noether theory and then we discuss Jacobian Kummer surfaces. In Section \ref{section_BN} we present sufficient conditions for line bundles on a Jacobian Kummer surface in order to obtain a proper first Brill-Noether locus on the corresponding Enriques surface. This is the content of Theorem \ref{thm_main-thm}. After this, in Propositions \ref{prop_ex1} and \ref{prop_ex2}, we give general examples of line bundles satisfying the hypothesis of Theorem \ref{thm_main-thm}. Some specific examples are also discussed.

\medskip    

\textbf{Acknowledgements.} The authors would like to thank professor Marian Aprodu for suggesting the problem and helpful discussions and the referee for carefully reading the paper and valuable comments.

\begin{small}
I. Macías Tarrío was partly supported by PID2020-113674GB-I00. C. Spiridon and A. Stoenicǎ were supported by the PNRR grant CF 44/14.11.2022 \textit{Cohomological Hall algebras of smooth surfaces and applications}.
\end{small}

\section{Preliminaries} \label{section_Preliminaries}

\subsection{Generalized Brill-Noether theory} \label{subsection_general_brill-noether}

For the purpose of this note, we will place ourselves in the surface case and work over $\mathbb{C}$. Let us fix a smooth projective surface $S$ and a polarization (i.e. ample line bundle) $H$ on $S$. The pair $(S,H)$ is usually called a \textit{polarized surface}. Recall first the definition of slope stability according to which we construct the moduli space of stable bundles on $S$.

\begin{defn}
If $\mathcal{E}$ is a vector bundle on $S$, we say that $\mathcal{E}$ is stable with respect to $H$ (or $H$-stable) if for any coherent subsheaf $\mathcal{F}$ of $\mathcal{E}$ with $0 < \operatorname{rk}(\mathcal{F}) < \operatorname{rk}(\mathcal{E})$ the following inequality holds true
\begin{align*}
    \mu_H(\mathcal{F}):=\frac{c_1(\mathcal{F})\cdot H}{\operatorname{rk}(\mathcal{F})}<\frac{c_1(\mathcal{E})\cdot H}{\operatorname{rk}(\mathcal{E})}:=\mu_H(\mathcal{E})
\end{align*}
\end{defn}

We denote by $M_H := M_{S,H}(r;c_1,c_2)$ the moduli space of $H$-stable vector bundles on $S$ of rank $r$ with fixed Chern classes $c_i$, for $i = 1,2$. We recall the following (see e.g. \cite[Section 4.5]{book_huybrechts_lehn_2010}):

\begin{thm} \label{thm_mh}
   If non-empty, $M_H$ is a quasi-projective variety and each irreducible component of it has dimension at least $2rc_2 - (r-1)c_1^2 - (r^2-1)\chi(\mathcal{O}_S)$. 
\end{thm}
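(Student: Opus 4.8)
The plan is to treat the two assertions separately: quasi-projectivity is a structural fact coming from Geometric Invariant Theory, while the dimension estimate is pure deformation theory combined with Riemann--Roch. For the first assertion I would invoke the Gieseker--Maruyama construction of $M_H$ exactly as in \cite[Ch.~4]{book_huybrechts_lehn_2010}. After twisting by a sufficiently ample $\mathcal{O}_S(m)$ one may assume every $H$-stable $\mathcal{E}$ with the prescribed invariants is $m$-regular, so that $\mathcal{E}(m)$ is globally generated with $\operatorname{h}^0(\mathcal{E}(m)) = P(m) =: N$ and no higher cohomology; a choice of isomorphism $\mathbb{C}^N \cong \operatorname{H}^0(\mathcal{E}(m))$ then presents $\mathcal{E}$ as a quotient $\mathcal{O}_S(-m)^{\oplus N} \twoheadrightarrow \mathcal{E}$, i.e. as a point of a suitable Quot scheme $Q$ carrying a natural $SL(N)$-action and linearization. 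The open locus $Q^s$ of points yielding $H$-stable sheaves is invariant, $M_H$ is its geometric quotient, and quotients of this form are quasi-projective; this gives the first claim.

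For the dimension bound, fix $[\mathcal{E}] \in M_H$. Deformation theory of sheaves identifies the Zariski tangent space $T_{[\mathcal{E}]}M_H$ with $\operatorname{Ext}^1(\mathcal{E},\mathcal{E})$ and places the obstructions in $\operatorname{Ext}^2(\mathcal{E},\mathcal{E})$; since we work over $\mathbb{C}$ the determinant deforms unobstructedly, so via the trace map the obstruction may be pushed into the trace-free part $\operatorname{Ext}^2_0(\mathcal{E},\mathcal{E})$. Formally locally at $[\mathcal{E}]$, the space $M_H$ is therefore the zero locus of a Kuranishi map $\operatorname{Ext}^1(\mathcal{E},\mathcal{E}) \to \operatorname{Ext}^2_0(\mathcal{E},\mathcal{E})$, so every irreducible component through $[\mathcal{E}]$ has dimension at least $\operatorname{ext}^1(\mathcal{E},\mathcal{E}) - \operatorname{ext}^2_0(\mathcal{E},\mathcal{E})$. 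As every point of $M_H$ corresponds to a stable, hence simple, sheaf, this estimate is available on each component.

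It remains to convert this into the stated number. Stability gives $\operatorname{hom}(\mathcal{E},\mathcal{E}) = 1$, and the trace splitting $\operatorname{Ext}^i(\mathcal{E},\mathcal{E}) = \operatorname{Ext}^i_0(\mathcal{E},\mathcal{E}) \oplus \operatorname{H}^i(\mathcal{O}_S)$ gives $\operatorname{ext}^1 - \operatorname{ext}^2_0 = 1 - \chi(\mathcal{E},\mathcal{E}) + \operatorname{h}^2(\mathcal{O}_S)$. Hirzebruch--Riemann--Roch applied to $\mathcal{E}^{\vee}\otimes\mathcal{E}$, using that the $c_1$-terms cancel in $\operatorname{ch}(\mathcal{E}^{\vee})\operatorname{ch}(\mathcal{E})$, yields $\chi(\mathcal{E},\mathcal{E}) = r^2\chi(\mathcal{O}_S) + (r-1)c_1^2 - 2rc_2$. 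Substituting and using $\chi(\mathcal{O}_S) = 1 - \operatorname{h}^1(\mathcal{O}_S) + \operatorname{h}^2(\mathcal{O}_S)$ turns the bound into $\chi(\mathcal{O}_S) + \operatorname{h}^1(\mathcal{O}_S) - \chi(\mathcal{E},\mathcal{E})$, which is $\geq \chi(\mathcal{O}_S) - \chi(\mathcal{E},\mathcal{E}) = 2rc_2 - (r-1)c_1^2 - (r^2-1)\chi(\mathcal{O}_S)$ after discarding the nonnegative irregularity term. For the surfaces of this note the irregularity vanishes, so nothing is discarded and the naive and refined estimates agree.

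The main obstacle is the local structure used in the middle step rather than any single computation: one must justify that $M_H$ is cut out near each stable point by exactly $\operatorname{ext}^2_0$ equations inside a smooth germ of dimension $\operatorname{ext}^1$, and in particular that the obstruction class is genuinely trace-free. It is this trace-free refinement, resting on the unobstructedness of the determinant in characteristic zero, that upgrades the coefficient from $r^2$ to $r^2-1$ in full generality; everything else is formal once the GIT quotient and the deformation-theoretic tangent and obstruction spaces are in hand.
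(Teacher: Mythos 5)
Your proposal is correct and is essentially the paper's own approach: the paper offers no proof of this statement, citing instead \cite[Section 4.5]{book_huybrechts_lehn_2010}, and your argument (Quot-scheme/GIT construction for quasi-projectivity, then deformation theory with trace-free obstruction space $\operatorname{Ext}^2_0$ plus Riemann--Roch for the bound $\operatorname{ext}^1 - \operatorname{ext}^2_0 \ge 2rc_2 - (r-1)c_1^2 - (r^2-1)\chi(\mathcal{O}_S)$) is precisely a reconstruction of the argument in that reference, with the computations carried out correctly.
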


\begin{thm}\cite[Theorem 2.3]{art_costa_miro-roig_2010} \label{thm_existance_BN}
     Let $(S,H)$ and $M_H = M_{S,H}(r; c_1,c_2)$ as above. We assume the following further condition:

$(\ast)$ $h^2(\mathcal{E}) = 0$ for any $\mathcal{E} \in M_H$;

Then, for any $k\geq0$, there exists a determinantal variety $W_H^k(r; c_1, c_2)$, called the \textit{$k$-th Brill-Noether loci}, such that
$$\Supp(W_H^k(r; c_1, c_2)) = \{\mathcal{E}\in M_H \ | \ h^0(\mathcal{E})\geq k\}.$$
Moreover, each non-empty irreducible component of $W_H^k(r;c_1, c_2)$ has dimension at least
\begin{align*}
  \rho_H^k(r;c_1,c_2):=& \dim(M_H) - k(k-\chi(r;c_1,c_2)) = \\
   =& \dim(M_H)-k(k-(r\chi\mathcal({O}_S) - \frac{1}{2}c_1\cdot K_S + \frac{1}{2}c_1^2 - c_2)) 
\end{align*}
and
\begin{align*}
    W_H^{k+1}(r;c_1, c_2)\subseteq \Sing(W_H^k(r;c_1, c_2))
\end{align*}
whenever $W_H^{k}(r;c_1, c_2)\neq M_H$.
\end{thm}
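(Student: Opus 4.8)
The plan is to realize $W^k_H$ as a degeneracy locus of a morphism of vector bundles on $M_H$ that computes, fibrewise, the space of global sections. The starting point is a universal family $\mathcal{E}$ on $S \times M_H$; since $M_H$ is only a coarse moduli space, a genuine universal sheaf need not exist, so I would first pass to an étale cover (or work with a quasi-universal, respectively twisted, universal family) on which such a family is available, carry out the construction there, and check at the end that everything is intrinsic to the cohomology-jump condition $\{h^0 \geq k\}$ and hence descends to $M_H$. Writing $p : S \times M_H \to M_H$ for the projection, the key input is cohomology and base change: étale-locally on the base, $Rp_*\mathcal{E}$ is represented by a bounded complex of finite-rank vector bundles. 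Here the hypothesis $(\ast)$ enters decisively, because $h^2(\mathcal{E}_t)=0$ for every $t$ (and $h^i=0$ for $i>2$ on a surface) forces the fibre cohomology to be concentrated in degrees $0$ and $1$, so one may choose a two-term representative
$$
\mathcal{K}^0 \xrightarrow{\ \phi\ } \mathcal{K}^1
$$
of vector bundles with $\ker(\phi_t)=\Ho^0(\mathcal{E}_t)$ and $\operatorname{coker}(\phi_t)=\Ho^1(\mathcal{E}_t)$ for all $t$. Concretely such a $\phi$ is produced by twisting with a sufficiently ample effective divisor $D$ and taking the long exact sequence of $0 \to \mathcal{E} \to \mathcal{E}(D) \to \mathcal{E}(D)|_D \to 0$, using Serre vanishing uniformly over the bounded family $M_H$. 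Without $(\ast)$ the section-counting locus would involve a length-three complex and would not be cut out determinantally by a single bundle map.

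Next I would set $n_i=\rank(\mathcal{K}^i)$ and define $W^k_H$ as the locus where $\rank(\phi_t) \leq n_0-k$, a degeneracy locus of $\phi$ cut out locally by the vanishing of the $(n_0-k+1)$-minors of $\phi$. Since $h^0(\mathcal{E}_t)=\dim\ker(\phi_t)=n_0-\rank(\phi_t)$, this carries the required support $\{\mathcal{E}\in M_H : h^0(\mathcal{E})\geq k\}$, and because the jump locus is intrinsic to $Rp_*\mathcal{E}$ it is independent of the auxiliary choices and descends to a well-defined closed subscheme of $M_H$. The dimension bound is then the standard codimension estimate for determinantal varieties: whenever the locus where a map of bundles of ranks $n_0,n_1$ has rank $\leq r$ is non-empty, each of its components has codimension at most $(n_0-r)(n_1-r)$ in the base. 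Taking $r=n_0-k$ gives codimension at most $k(n_1-n_0+k)$. Finally $\chi(\mathcal{E}_t)=n_0-n_1$, the Euler characteristic being the alternating sum of the ranks of the complex, so $n_1-n_0=-\chi(r;c_1,c_2)$ and the codimension is at most $k(k-\chi(r;c_1,c_2))$; subtracting from $\dim M_H$ yields exactly $\rho^k_H$.

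For the last assertion I would invoke the local structure of degeneracy loci: for the universal variety of $n_0\times n_1$ matrices of rank $\leq r$, the singular locus is precisely the sublocus of matrices of rank $\leq r-1$. Pulling this back along the map classifying $\phi$, one obtains $W^{k+1}_H\subseteq\Sing(W^k_H)$, the hypothesis $W^k_H\neq M_H$ guaranteeing that $\phi$ has generic rank strictly greater than $n_0-k$ on each component of the base, which places us in the regime where this singular-locus description is valid. The main obstacle, in my view, is not any of these formal determinantal computations but the foundational bookkeeping of the first paragraph: producing the two-term complex $\phi$ globally enough and verifying that the resulting determinantal scheme is canonically attached to $M_H$ despite the possible absence of a universal family, i.e. the descent along the étale cover and the independence of the construction from the choice of $D$ and of the complex representing $Rp_*\mathcal{E}$. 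Once that is in place, the statements about $\Supp$, dimension and singularities follow formally from the theory of determinantal varieties.
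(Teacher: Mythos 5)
Note first that the paper you are comparing against does not prove this statement at all: it is quoted verbatim, with citation, from Costa--Mir\'o-Roig \cite[Theorem 2.3]{art_costa_miro-roig_2010}, so the only meaningful comparison is with the argument in that reference. Your reconstruction is essentially that argument: use $(\ast)$ together with cohomology and base change to represent $Rp_*$ of a (quasi-)universal family by a two-term complex of vector bundles $\phi\colon \mathcal{K}^0\rightarrow\mathcal{K}^1$ computing $\Ho^0$ and $\Ho^1$ fibrewise, define $W^k_H$ as the degeneracy locus $\{\rank\phi_t\le n_0-k\}$, and apply the standard codimension bound $(n_0-r)(n_1-r)$ for determinantal loci with $r=n_0-k$ and $\chi=n_0-n_1$; the identification of where $(\ast)$ is indispensable (without it the section-counting locus lives on a three-term complex and is not a single degeneracy locus) is exactly right, and your twist-by-an-ample-divisor construction of $\phi$ is the one used in the original proof (the main foundational difference being that Costa--Mir\'o-Roig work on the Quot scheme and descend through the GIT quotient, rather than via \'etale covers or twisted families).

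One step is justified by an invalid mechanism, though the conclusion is the standard one: the inclusion $W^{k+1}_H\subseteq\Sing(W^k_H)$ does not follow by ``pulling back'' the equality $\Sing(M_r)=M_{r-1}$ for the generic determinantal variety, because singularity is not preserved under pullback (a constant map $\phi$ of rank $n_0-k-1$ has $W^k_H=M_H$ smooth wherever $M_H$ is, yet every point lies over $M_{r-1}$). The correct argument, and the one your hypothesis $W^k_H\neq M_H$ is actually feeding, is local: at a point $x\in W^{k+1}_H$ one can choose trivializations in which $\phi(x)$ is an identity block of size at most $n_0-k-1$, so every $(n_0-k+1)$-minor cutting out $W^k_H$ lies in $\mathfrak{m}_x^2$; hence the Zariski tangent space of $W^k_H$ at $x$ is all of $T_xM_H$, and since $W^k_H$ is a proper subvariety this forces $x$ to be a singular point of $W^k_H$. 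With that substitution your outline matches the cited proof; the remaining work you correctly identify as bookkeeping (descent, independence of the choice of $D$ and of the representative of $Rp_*$).
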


As one might expect, the number $\rho_H^k(r;c_1,c_2)$ is called the \textit{generalized Brill-Noether number} or the \textit{expected dimension} of $W_H^k(r;c_1.c_2)$.  

The theorem above shows in particular that the Brill-Noether loci form a filtration of the moduli space $M_{S,H}(r;c_1,c_2)$:
\[
M_H =  W_H^0(r;c_1,c_2)\supseteq W_H^1(r;c_1,c_2)\supseteq \ldots \supseteq W_H^{k}(r;c_1,c_2)\supseteq\ldots
\]

As noticed in \cite[Corollary 2.3]{art_costa_miro-roig_2010}, the vanishing assumption $(\ast)$ of Theorem \ref{thm_existance_BN} holds true when $(c_1 . H) \ge (rK_S . H)$. The latter is true in particular if the first Chern class $c_1$ is effective and the canonical divisor on $S$ in numerically trivial. Thus, for the purpose of our note, we record the following:

\begin{rem} \label{rem_vanishing}
    Let $Y$ be an Enriques surface (i.e. a smooth projective surface with $q(Y) = p_a(Y) = 0$ and $2K_Y = 0$) and $c_i\in H^{2i}(Y,\mathbb{Z})$, for $i = 1, 2$, such that $(c_1.H)>0$ for any polarization $H$. Then, for any polarization $H$, the vanishing condition $(\ast)$ holds true because otherwise there would be an injective morphism $\mathcal{O}_Y \rightarrow \mathcal{E}^{\vee} \otimes K_Y$ and from here the vanishing can be proved quickly using stability and Chern classes. A particular case which will be of interest for us in this paper is when $c_1$ is effective, then $(c_1.H)>0$ for any polarization. Consequently, for any $r \ge 2$ and for any $k\ge 0$, the $k$-th Brill-Noether locus $W^k_H(r;c_1,c_2)$ exists.
\end{rem}

\subsection{Jacobian Kummer surfaces} \label{subsection_Jacobian}

In the sequel, we recall the construction of a Jacobian Kummer surface (see e.g. \cite[Section 1]{art_keum_1997}). Let $C$ be a smooth complex projective curve of genus $2$ and $\mathcal{A} = J(C)$ its Jacobian variety. Note that $\mathcal{A}$ is an abelian surface which comes with an involution $\iota:\mathcal{A}\rightarrow \mathcal{A}$ with $16$ fixed points. The quotient $\mathcal{K} = \mathcal{A}/\iota$ can be identified with the so-called Kummer quartic, which is a surface of degree $4$ in $\mathbb{P}^3$ with the maximal possible number of $16$ singular points. Each singular point is a node. Moreover, there are precisely $16$ planes in $\mathbb{P}^3$ touching $\mathcal{K}$ along a conic. Each of these conics is called a trope. An important feature of $\mathcal{K}$ is the so-called $(16_6)$-configuration, that is any node lies on exactly $6$ tropes and each trope passes through exactly $6$ nodes.

The Jacobian Kummer surface $X = Kum(\mathcal{A})$ is the minimal desingulari\-zation of $\mathcal{K}$. The exceptional curves on $X$ lying over the nodes of $\mathcal{K}$ and the proper transforms of the tropes of $\mathcal{K}$ will be also called nodes and respectively tropes of $X$. The nodes and the tropes of the Jacobian Kummer surface $X$ form two families of $16$ mutually disjoint $(-2)$-curves on $X$. They can be described more explicitly in order to have a better understanding of the Picard lattice of $X$. For references we indicate \cite[Section 4]{art_ohashi_2009}, \cite[Section 2]{art_aprodu_kim_2020}. 

Take $p_1,\ldots, p_6$ the Weierstrass points of $C$. The $2$-torsion points of $\mathcal{A}$ are $[0]$ and $[p_i-p_j]$, $1\le i < j \le 6$. Accordingly, we denote the nodes of $X$ by $E_0$ and $E_{ij}$, $1\le i < j \le 6$. The curve $C$ has also precisely $16$ theta characteristics (i.e. divisor classes $D\in \Pic(C)$ such that $2D\sim K_C$), namely $[p_i]$, $1\le i\le 6$ and $[p_i+p_j-p_6]$ with $1\le i < j \le 5$, giving rise to $16$ theta divisors on $\mathcal{A}$, whose proper transforms are the tropes of $X$. Accordingly, we denote the tropes of $X$ by $T_{i}$, $1\le i\le 6$ and $T_{ij6}$, $1\le i<j\le 5$.

The lemma below can be easily deduced from \cite[Section 1]{art_keum_1997} and \cite[Section 2]{art_aprodu_kim_2020}:  

\begin{lema} \label{lemma_intersection_nodes_tropes}
    The intersection form $(\ .\ )$ on $X$ has the following values for the nodes and the tropes of $X$:
\begin{itemize}
    \item[1)] $(E_0.T_i) = 1$ for any $1\le i\le 6$;
    \item[2)] $(E_0. T_{ij6}) = 0$ for any $1\le i<j\le 5$;
    \item[3)] $(E_{ij}.T_k) = \left\{
        \begin{array}{ll}
             1 & \text{if } k\in \{i,j\}\\
             0 & \text{otherwise}
        \end{array}
        \right.$
        
    \item[4)] $(E_{ij}. T_{kl6}) = \left\{
        \begin{array}{ll}
             1 & \text{if } \{i,j\}\subseteq\{k,l,6\} \text{ or } \{i,j\}\cap \{k,l,6\} = \emptyset\\
             0 & \text{otherwise}
        \end{array}
        \right.$
\end{itemize}
\end{lema}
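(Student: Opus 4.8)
The plan is to translate every intersection number into an incidence question on the abelian surface $\mathcal{A}=J(C)$ and then settle that incidence by Riemann--Roch on the genus-$2$ curve $C$. First I would fix the dictionary between the geometry on $X$ and divisor theory on $C$. Each node $E_\epsilon$ is the exceptional $(-2)$-curve over a $2$-torsion point $\epsilon\in\mathcal{A}[2]$, with $E_0$ over $[0]$ and $E_{ij}$ over $[p_i-p_j]$; each trope is the proper transform $T_\kappa$ of a symmetric theta divisor $\Theta_\kappa=W_1-\kappa\subset\mathcal{A}$ attached to a theta characteristic $\kappa\in\Pic^1(C)$, the odd ones $\kappa=[p_i]$ producing the $T_i$ and the even ones $\kappa=[p_k+p_l-p_6]$ producing the $T_{kl6}$. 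Here $W_1=\{[p]:p\in C\}$ is the Abel--Jacobi image, so $\Theta_\kappa$ is a smooth curve isomorphic to $C$, and $2\kappa\sim K_C$ guarantees that $\Theta_\kappa$ is $\iota$-invariant.

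The key geometric step is the identity
\[
(E_\epsilon . T_\kappa)_X=\begin{cases}1 & \text{if } \epsilon\in\Theta_\kappa,\\ 0 & \text{otherwise,}\end{cases}
\]
which I would prove locally at the fixed point $\epsilon$. Since $\iota$ acts as $-1$ on $T_\epsilon\mathcal{A}\cong\mathbb{C}^2$, the Kummer quartic $\mathcal{K}$ acquires an ordinary double point at the image of $\epsilon$, resolved by the single $(-2)$-curve $E_\epsilon$. If $\epsilon\in\Theta_\kappa$, then $\Theta_\kappa$ is smooth at $\epsilon$ (being isomorphic to $C$) and $\iota$-invariant, so its image in $\mathcal{K}$ is a ruling of the local quadric cone, whose proper transform meets $E_\epsilon$ transversally in one point; if $\epsilon\notin\Theta_\kappa$ the proper transform misses $E_\epsilon$ entirely. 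This reduces the entire lemma to deciding, for each pair $(\epsilon,\kappa)$, whether $\epsilon$ lies on $\Theta_\kappa$.

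Because $\Theta_\kappa=W_1-\kappa$, the incidence $\epsilon\in\Theta_\kappa$ is equivalent to the degree-$1$ class $\kappa+\epsilon$ being effective on $C$. I would then run the four cases through Riemann--Roch, using repeatedly the hyperelliptic relations $2[p_a]\sim K_C$ and $\textstyle\sum_{m=1}^{6}[p_m]\sim 3K_C$ together with $h^0(p_a+p_b)=1$ for distinct Weierstrass points: for example $[0]\in\Theta_{[p_i]}$ because $[p_i]$ is already effective, giving $(E_0.T_i)=1$, while $[p_k+p_l-p_6]$ is an even (hence non-effective) theta characteristic for $k<l\le 5$, giving $(E_0.T_{kl6})=0$. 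The two-index cases are identical in spirit; reducing $[p_k+p_l-p_6]+[p_i-p_j]$ to a single Weierstrass class shows it is effective exactly when $\{i,j\}\subseteq\{k,l,6\}$ or $\{i,j\}\cap\{k,l,6\}=\emptyset$, which is precisely the stated dichotomy. The one genuinely delicate point is the $\mathbb{Z}$-generation claim: the incidence computation only determines the pairing among the $32$ classes, and to conclude that $E_0,E_{ij},T_i,T_{ij6}$ generate all of $\Pic(X)$ rather than a proper finite-index sublattice, one must compare discriminants and invoke the known lattice structure of the Jacobian Kummer surface; this is the step I would defer to \cite{art_ohashi_2009} rather than reprove.
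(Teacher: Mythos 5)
Your proposal is correct, but it takes a genuinely different route from the paper: the paper offers no proof at all for this lemma, simply quoting it from Ohashi's work (the ``cf.'' citation), whereas you reconstruct the intersection numbers from first principles. Your reduction of $(E_\epsilon.T_\kappa)$ to the incidence $\epsilon\in\Theta_\kappa$ is sound --- on the blow-up $\tilde{\mathcal{A}}\to\mathcal{A}$ of the sixteen $2$-torsion points, the double cover $q:\tilde{\mathcal{A}}\to X$ is branched exactly along the exceptional curves, so $q^*E_\epsilon=2\mathcal{E}_\epsilon$ while $q^*T_\kappa$ is the proper transform of $\Theta_\kappa$, which meets $\mathcal{E}_\epsilon$ transversally in one point precisely when $\epsilon\in\Theta_\kappa$; this gives your key identity cleanly and confirms your local quadric-cone picture. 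The Riemann--Roch case analysis also checks out: writing $\epsilon+\kappa$ as a degree-$1$ class and using $2[p_a]\sim K_C$, $\sum_m [p_m]\sim 3K_C$, and $h^0(p_a+p_b)=1$ does yield effectivity exactly in the configurations listed in the statement (and, as a consistency check, reproduces the $(16_6)$-configuration). What your approach buys is transparency: the incidence table is verified rather than imported, and the role of odd versus even theta characteristics becomes visible. What the paper's approach buys is that the one statement your method genuinely cannot reach --- that these $32$ classes generate $\Pic(X)$ over $\mathbb{Z}$, not merely a finite-index sublattice --- is covered by the same citation; you are right to flag that this requires the rank-$17$ and discriminant computations of the Jacobian Kummer lattice and to defer it to Ohashi, since no amount of pairwise intersection data can settle saturation.
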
 

Due to a result of Keum \cite[Theorem 2]{art_keum_1990}, there exists a fixed-point-free involution $\theta$ of $X$, giving rise to an Enriques surface $Y = X/\theta$. Let us denote by $\pi:X\rightarrow Y$ the \'etale double cover of $Y$. The set of line bundles on $X$ which come from line bundles on $Y$ via pullback is described by a result of Horikawa \cite[Theorem 5.1]{art_horikawa_1978} and recast in the following form in \cite[Lemma 2.2]{art_aprodu_kim_2020}:

\begin{lema} \label{lemma_image_of_pi_star}
    The image of the map $\pi^*:\Pic(Y) \rightarrow \Pic(X)$ is the set of line bundles $M\in \Pic(X)$ such that $\theta^*M\sim M$.
\end{lema}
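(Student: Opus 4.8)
The plan is to prove the two inclusions separately, the nontrivial content being descent along the étale double cover. For the easy inclusion I would use that $\theta$ is the nontrivial deck transformation of $\pi$, so that $\pi\circ\theta=\pi$. Hence for any $L\in\Pic(Y)$ and $M=\pi^*L$ one gets $\theta^*M=\theta^*\pi^*L=(\pi\circ\theta)^*L=\pi^*L=M$, which shows that the image of $\pi^*$ is contained in the set of $\theta$-invariant line bundles.

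For the converse, the task is to descend a $\theta$-invariant $M$ to $Y$. First I would fix an isomorphism $\phi\colon\theta^*M\xrightarrow{\sim}M$, which exists by hypothesis. To read $\phi$ as descent data for the free $\mathbb{Z}/2$-action generated by $\theta$, I need it to be a genuine linearization, i.e. to satisfy the cocycle condition $\phi\circ\theta^*\phi=\mathrm{id}_M$. In general $\phi\circ\theta^*\phi$ is an automorphism of $M$, hence (as $X$ is connected and projective) multiplication by a scalar $\lambda\in\mathbb{C}^*$. Rescaling $\phi$ by $c\in\mathbb{C}^*$ replaces $\lambda$ by $c^2\lambda$, so choosing $c$ with $c^2=\lambda^{-1}$, which is possible because $\mathbb{C}$ is algebraically closed, normalizes the cocycle to $\lambda=1$.

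Once $M$ carries a $\theta$-linearization, I would invoke effective faithfully flat descent along the étale Galois cover $\pi$, equivalently the equivalence between $\mathbb{Z}/2$-equivariant line bundles on $X$ and line bundles on the free quotient $Y=X/\theta$, to produce $L\in\Pic(Y)$ with $\pi^*L\cong M$. I expect the rescaling step to be the crux: it is precisely the vanishing of the obstruction in $H^2(\mathbb{Z}/2,\mathbb{C}^*)$, which holds because $\mathbb{C}^*$ is divisible (every scalar admits a square root). I would also note that the kernel of $\pi^*$ is the $2$-torsion class generated by $K_Y$, reflecting $H^1(\mathbb{Z}/2,\mathbb{C}^*)\cong\mu_2$. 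Packaging both facts simultaneously, the five-term exact sequence attached to the Hochschild--Serre spectral sequence $H^p(\mathbb{Z}/2,H^q(X,\mathcal{O}_X^*))\Rightarrow H^{p+q}(Y,\mathcal{O}_Y^*)$ reads
\[
0\to H^1(\mathbb{Z}/2,\mathbb{C}^*)\to\Pic(Y)\xrightarrow{\pi^*}\Pic(X)^{\theta}\to H^2(\mathbb{Z}/2,\mathbb{C}^*),
\]
and the surjectivity onto $\Pic(X)^{\theta}$, guaranteed by $H^2(\mathbb{Z}/2,\mathbb{C}^*)=0$, is exactly the assertion to be proved.
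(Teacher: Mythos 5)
Your proof is correct, but note that the paper does not actually prove this lemma: it is quoted from Horikawa's Theorem 5.1 as reformulated in Aprodu--Kim, so what you have written is a self-contained substitute for a citation. Both halves of your argument are sound. The identity $\pi\circ\theta=\pi$ for the deck transformation gives the easy inclusion, and for the converse your normalization step is exactly right: $\phi\circ\theta^*\phi$ is an automorphism of $M$, hence a scalar $\lambda\in\mathbb{C}^*$ because $X$ is connected and projective, and replacing $\phi$ by $c\phi$ with $c^2=\lambda^{-1}$ produces a genuine $\mathbb{Z}/2$-linearization; descent of equivariant sheaves along the finite étale free quotient $\pi$ then yields $L\in\Pic(Y)$ with $\pi^*L\cong M$. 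The Hochschild--Serre packaging is also correct: with the (trivial) action of $\mathbb{Z}/2$ on $H^0(X,\mathcal{O}_X^*)=\mathbb{C}^*$ one has $H^1(\mathbb{Z}/2,\mathbb{C}^*)=\mu_2$, matching $\ker\pi^*=\{\mathcal{O}_Y,\omega_Y\}$, and $H^2(\mathbb{Z}/2,\mathbb{C}^*)=\mathbb{C}^*/(\mathbb{C}^*)^2=1$, which is precisely the vanishing of the obstruction your rescaling argument exhibits concretely. In effect you have reproduced the standard group-cohomological argument underlying Horikawa's result; what the citation buys the paper is brevity, while your version makes the lemma independent of the period-theoretic context of Horikawa's paper and makes visible exactly which properties are used (freeness of the $\theta$-action, connectedness and projectivity of $X$, and divisibility of $\mathbb{C}^*$).
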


The action of $\theta^*$ is presented in the table below \cite{art_aprodu_kim_2020}:

\begin{center}
    \begin{tabular}{|ccc|ccc|}
         \hline
         Nodes  & &  Tropes & Nodes & &  Tropes \\    
         \hline
         $E_{0}$ & $\longleftrightarrow$ & $T_{456}$ & $E_{25}$ & $\longleftrightarrow$ & $T_{246}$ \\
         $E_{12}$ & $\longleftrightarrow$ & $T_{3}$ & $E_{26}$ & $\longleftrightarrow$ & $T_{136}$ \\
         $E_{13}$ & $\longleftrightarrow$ & $T_{2}$ & $E_{34}$ & $\longleftrightarrow$ & $T_{356}$ \\
         $E_{14}$ & $\longleftrightarrow$ & $T_{156}$ & $E_{35}$ & $\longleftrightarrow$ & $T_{346}$ \\
         $E_{15}$ & $\longleftrightarrow$ & $T_{146}$ & $E_{36}$ & $\longleftrightarrow$ & $T_{126}$ \\
         $E_{16}$ & $\longleftrightarrow$ & $T_{236}$ & $E_{45}$ & $\longleftrightarrow$ & $T_{6}$ \\
         $E_{23}$ & $\longleftrightarrow$ & $T_{1}$ & $E_{46}$ & $\longleftrightarrow$ & $T_{5}$ \\
         $E_{24}$ & $\longleftrightarrow$ & $T_{256}$ & $E_{56}$ & $\longleftrightarrow$ & $T_{4}$ \\
         \hline
    \end{tabular}
\end{center}

\section{Brill-Noether loci} \label{section_BN}

\subsection{Main result}

Let $X$ be a Jacobian Kummer surface, $\theta$ its fixed-point-free involution, $Y = X/\theta$ and $\pi:X\rightarrow Y$ the double cover of $Y$. In the sequel, we will state and prove the main result of this note. But first, let us recall the following important result of Takemoto, as recast by H. Kim \cite[Section 2]{art_kim_1998}:

\begin{thm} \label{thm_stable_push_forward}
    Let $H$ be a polarization of $Y$. If a $\pi^*H$-stable bundle $\mathcal{E}$ is not isomorphic to $\pi^*\mathcal{F}$ for any bundle $\mathcal{F}$ on $Y$, then $\pi_*\mathcal{E}$ is $H$-stable.
\end{thm}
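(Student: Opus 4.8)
The plan is to reduce the stability of $\pi_*\mathcal{E}$ to a statement about the polystable sheaf $\pi^*\pi_*\mathcal{E}$ on $X$ and to exploit that the hypothesis forces $\mathcal{E}$ and $\theta^*\mathcal{E}$ to be non-isomorphic. First I would record the formalism of the étale double cover $\pi$. Since $\pi$ is finite étale of degree $2$ with Galois group $\langle\theta\rangle$, there is a canonical isomorphism $\pi^*\pi_*\mathcal{E}\cong\mathcal{E}\oplus\theta^*\mathcal{E}$, under which the tautological $\langle\theta\rangle$-linearization of the pullback interchanges the two summands (via $\theta^*\theta^*\mathcal{E}\cong\mathcal{E}$). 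On slopes, the projection formula gives $c_1(\pi^*\mathcal{G})\cdot\pi^*H=2\,c_1(\mathcal{G})\cdot H$ for any sheaf $\mathcal{G}$ on $Y$ while ranks are preserved, so $\mu_{\pi^*H}(\pi^*\mathcal{G})=2\mu_H(\mathcal{G})$; applying this to $\mathcal{G}=\pi_*\mathcal{E}$ and using the decomposition yields $\mu_{\pi^*H}(\mathcal{E})=\mu_{\pi^*H}(\theta^*\mathcal{E})=2\mu_H(\pi_*\mathcal{E})=:s$. Moreover $\theta^*\mathcal{E}$ is again $\pi^*H$-stable of slope $s$, because $\theta$ is an automorphism with $\pi\circ\theta=\pi$, so $\mathcal{E}\oplus\theta^*\mathcal{E}$ is $\mu_{\pi^*H}$-polystable of slope $s$.

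Next I would translate the hypothesis into the statement $\mathcal{E}\not\cong\theta^*\mathcal{E}$, for which I need only the implication that $\theta^*\mathcal{E}\cong\mathcal{E}$ forces $\mathcal{E}$ to be a pullback. Given an isomorphism $\phi\colon\theta^*\mathcal{E}\to\mathcal{E}$, the composite $\phi\circ\theta^*\phi$ is an automorphism of $\mathcal{E}$, hence a nonzero scalar $\lambda$ since $\mathcal{E}$, being $\pi^*H$-stable, is simple; rescaling $\phi$ by $\lambda^{-1/2}$ (a square root exists in $\mathbb{C}^*$) turns $\phi$ into a genuine $\langle\theta\rangle$-linearization satisfying the cocycle condition, and effectivity of Galois descent along the $\mathbb{Z}/2$-torsor $\pi$ then produces $\mathcal{F}$ on $Y$ with $\pi^*\mathcal{F}\cong\mathcal{E}$. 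Contrapositively, the assumption that $\mathcal{E}$ is not a pullback gives $\mathcal{E}\not\cong\theta^*\mathcal{E}$.

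Finally I would argue by contradiction. Suppose $\pi_*\mathcal{E}$ is not $H$-stable and choose a saturated destabilizing subsheaf $\mathcal{G}\subset\pi_*\mathcal{E}$ with $0<\operatorname{rk}\mathcal{G}<2\operatorname{rk}\mathcal{E}$ and $\mu_H(\mathcal{G})\ge\mu_H(\pi_*\mathcal{E})$. Pulling back (exact, as $\pi$ is flat) yields a $\theta$-invariant subsheaf $\pi^*\mathcal{G}\subset\mathcal{E}\oplus\theta^*\mathcal{E}$ of slope $\ge s$; by polystability of the ambient sheaf its slope is exactly $s$, and its saturation $\widetilde{\mathcal{G}}$ is a subobject of slope $s$ in the abelian category of $\mu_{\pi^*H}$-semistable sheaves of slope $s$, in which $\mathcal{E}$ and $\theta^*\mathcal{E}$ are simple objects. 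Since $\mathcal{E}\not\cong\theta^*\mathcal{E}$, a simple subobject of $\mathcal{E}\oplus\theta^*\mathcal{E}$ can map nontrivially to only one factor, so the only proper nonzero subobjects are the two summands $\mathcal{E}\oplus 0$ and $0\oplus\theta^*\mathcal{E}$. But $\widetilde{\mathcal{G}}$, being the saturation of a pulled-back subsheaf, is $\theta$-invariant, whereas $\theta$ interchanges the two summands; hence $\widetilde{\mathcal{G}}$ can be neither of them, and the rank bounds exclude $0$ and the whole sheaf. This contradiction shows $\pi_*\mathcal{E}$ is $H$-stable.

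The step I expect to be the main obstacle is the reduction in the second paragraph: making precise that a sheaf-level isomorphism $\theta^*\mathcal{E}\cong\mathcal{E}$ upgrades to an honest $\langle\theta\rangle$-linearization and hence to effective descent data. This is exactly where simplicity of the stable bundle $\mathcal{E}$ and the vanishing of the relevant obstruction over $\mathbb{C}$ enter; once that equivalence and the $\theta$-equivariant polystable structure of $\mathcal{E}\oplus\theta^*\mathcal{E}$ are in place, the contradiction is formal.
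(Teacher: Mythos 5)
The paper itself gives no proof of this statement: it is imported verbatim from Takemoto's theorem as recast by Kim \cite[Section 2]{art_kim_1998}, so there is no in-paper argument to compare yours against. Judged on its own merits, your proof is correct, and it is essentially the standard proof of this classical descent result: the decomposition $\pi^*\pi_*\mathcal{E}\cong\mathcal{E}\oplus\theta^*\mathcal{E}$ for the \'etale double cover, the slope bookkeeping $\mu_{\pi^*H}(\pi^*\mathcal{G})=2\mu_H(\mathcal{G})$, the observation that an isomorphism $\theta^*\mathcal{E}\cong\mathcal{E}$ can be rescaled (using simplicity of the stable bundle $\mathcal{E}$) to a genuine $\mathbb{Z}/2$-linearization and hence to descent data producing $\mathcal{F}$ with $\pi^*\mathcal{F}\cong\mathcal{E}$, and the analysis of $\theta$-invariant subsheaves of the polystable sheaf $\mathcal{E}\oplus\theta^*\mathcal{E}$ all work as you describe.

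One step deserves more care than you give it. The category of $\mu$-semistable sheaves of fixed slope $s$ is \emph{not} literally abelian: for instance $\mathcal{I}_Z\mathcal{E}\hookrightarrow\mathcal{E}$ is both a monomorphism and an epimorphism in it without being an isomorphism, precisely because slope is blind to codimension-two phenomena. So the phrase ``simple objects in the abelian category of $\mu$-semistable sheaves of slope $s$'' needs to be interpreted either in the quotient category of coherent sheaves modulo sheaves supported in dimension zero (as in Huybrechts--Lehn \cite{book_huybrechts_lehn_2010}), after which one must still pass from equality of subobjects up to dimension-zero corrections back to genuine equality of subsheaves (here your use of saturation does exactly this, since a saturated subsheaf is determined by its generic fibre), or replaced by an elementary case analysis: for a saturated slope-$s$ subsheaf $\widetilde{\mathcal{G}}\subset\mathcal{E}\oplus\theta^*\mathcal{E}$ of intermediate rank, consider the two projections; stability forces any nonzero such projection to be injective with full-rank image and zero-dimensional cokernel, and the double-dual argument then either identifies $\widetilde{\mathcal{G}}$ with one of the two summands or manufactures an isomorphism $\mathcal{E}\cong\theta^*\mathcal{E}$, contradicting what you proved in your second paragraph. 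With that point made precise, your contradiction via $\theta$-invariance of $\widetilde{\mathcal{G}}$ (the linearization swaps the two summands, so neither is invariant) closes the argument, and the proof is complete.
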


Combining Lemma \ref{lemma_image_of_pi_star} and Theorem \ref{thm_stable_push_forward} we obtain a recipe to construct stable rank $2$ bundles on the Enriques surface $Y$:

\begin{cor} \label{cor_pushforward_stable}
    If $M\in \Pic(X)$ is a line bundle on $X$ such that $M \not\sim \theta^*M$, then $\pi_*\mathcal{O}_X(M)$ is a stable rank $2$ bundle on $Y$ with respect to any polarization of $Y$.
\end{cor}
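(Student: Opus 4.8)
The plan is to apply Theorem \ref{thm_stable_push_forward} directly, after setting $\mathcal{E} = \mathcal{O}_X(M)$ and $H$ any polarization of $Y$. The theorem takes as input a $\pi^*H$-stable bundle on $X$ that does not descend (i.e. is not a pullback), and returns $H$-stability of its pushforward. So the proof reduces to verifying two things: that $\mathcal{O}_X(M)$ is $\pi^*H$-stable, and that $\mathcal{O}_X(M) \not\cong \pi^*\mathcal{F}$ for any bundle $\mathcal{F}$ on $Y$.

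\textbf{First I would dispense with stability.} A line bundle on a smooth projective surface has no subsheaves of intermediate rank (there is no integer strictly between $0$ and $1$), so the stability condition is vacuously satisfied. Hence $\mathcal{O}_X(M)$ is automatically $\pi^*H$-stable (indeed stable with respect to any polarization), and this step costs nothing.

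\textbf{Next I would rule out descent.} Suppose for contradiction that $\mathcal{O}_X(M) \cong \pi^*\mathcal{F}$ for some bundle $\mathcal{F}$ on $Y$. Since the left-hand side is a line bundle and $\pi$ is a finite flat morphism of degree $2$, $\mathcal{F}$ must itself be a line bundle, say $\mathcal{F} = \mathcal{O}_Y(N)$ with $N \in \Pic(Y)$; then $M \sim \pi^* N$ lies in the image of $\pi^*:\Pic(Y)\to\Pic(X)$. By Lemma \ref{lemma_image_of_pi_star}, every line bundle in that image satisfies $\theta^* M \sim M$. This contradicts the hypothesis $M \not\sim \theta^* M$. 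Therefore $\mathcal{O}_X(M)$ does not descend, and Theorem \ref{thm_stable_push_forward} applies, yielding that $\pi_*\mathcal{O}_X(M)$ is $H$-stable.

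\textbf{It remains to confirm the rank.} Because $\pi:X\to Y$ is an \'etale double cover, the pushforward of any line bundle has rank equal to the degree of $\pi$, namely $2$; this can be checked on a trivializing open set where $\pi$ looks like a disjoint union of two copies. Combining the three points gives a stable rank $2$ bundle on $Y$ for every polarization $H$, as claimed. \textbf{The only step requiring any care} is the descent argument, and even there the work is entirely packaged into Lemma \ref{lemma_image_of_pi_star}; the real content of the corollary lies in the cited results, so I expect no genuine obstacle, merely the bookkeeping of reducing $\mathcal{F}$ to a line bundle.
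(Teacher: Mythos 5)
Your proposal is correct and is precisely the argument the paper intends: the corollary is stated as an immediate combination of Theorem \ref{thm_stable_push_forward} and Lemma \ref{lemma_image_of_pi_star}, with the tacit observations that a line bundle is vacuously slope-stable and that a line bundle not fixed by $\theta^*$ cannot be a pullback. Your write-up simply makes explicit the bookkeeping (reducing a hypothetical $\mathcal{F}$ to a line bundle, and the rank count for an \'etale double cover) that the paper leaves to the reader.
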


\begin{thm} \label{thm_main-thm}
    Let $\pi: X\rightarrow Y$ be the double cover map of an Enriques surface $Y$ by a Jacobian Kummer surface $X$. Let also $D$ be a nonzero effective divisor on $X$ such that:
    \begin{itemize}
        \item[(i)] $h^0(\mathcal{O}_X(D))=1$;
        \item[(ii)] $\theta^*D'\not\sim D'$ for any nonzero effective subdivisor $D'\subseteq D$;
        \item [(iii)] $(D^2) \le -4$.
    \end{itemize}
    
    Take  $\mathcal{V}=\pi_{*}\mathcal{O}_X(D)$ and denote by $c_i:=c_i(\mathcal{V})$ for $i=1,2$.
    Then for any ample line bundle $H$ on $Y$ one has
    $$\emptyset\neq W_H^1(2;c_1,c_2)\subsetneq M_H(2;c_1,c_2)$$
\end{thm}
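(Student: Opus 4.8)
The strategy is to produce an explicit element of $W_H^1(2;c_1,c_2)$ (proving non-emptiness) and then exhibit an element of $M_H(2;c_1,c_2)$ lacking sections (proving proper containment). The natural candidate for a member of the Brill-Noether locus is $\mathcal{V}=\pi_*\mathcal{O}_X(D)$ itself. First I would verify that $\mathcal{V}$ is $H$-stable: since hypothesis (ii) applied to $D'=D$ gives $\theta^*D\not\sim D$, Corollary \ref{cor_pushforward_stable} shows $\mathcal{V}=\pi_*\mathcal{O}_X(D)$ is a stable rank $2$ bundle for every polarization $H$, so $\mathcal{V}\in M_H(2;c_1,c_2)$. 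Next, the projection formula and the fact that $\pi$ is finite give $h^0(Y,\mathcal{V})=h^0(Y,\pi_*\mathcal{O}_X(D))=h^0(X,\mathcal{O}_X(D))=1$ by hypothesis (i). Thus $\mathcal{V}$ has a section, so $\mathcal{V}\in W_H^1(2;c_1,c_2)$, and the locus is non-empty. To even speak of $W_H^1$ I must check the existence hypothesis of Theorem \ref{thm_existance_BN}: by Remark \ref{rem_vanishing} it suffices that $c_1=c_1(\mathcal{V})$ be effective, which I would confirm from the construction (the pushforward of an effective divisor under the double cover).

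\textbf{Proper containment.} For $W_H^1\subsetneq M_H$ I must produce a stable bundle in $M_H(2;c_1,c_2)$ with \emph{no} sections. The cleanest route is to show that the generic bundle in the component containing $\mathcal{V}$ has $h^0=0$, which by upper semicontinuity would follow once I show $\mathcal{V}$ is a special point. Concretely, I would compute the generalized Brill-Noether number $\rho_H^1(2;c_1,c_2)$ and compare it with $\dim M_H$. Using $\chi(2;c_1,c_2)=2\chi(\mathcal{O}_Y)+\tfrac12 c_1\cdot K_Y-\tfrac12 c_1^2+c_2$ and the numerical invariants of the Enriques surface ($\chi(\mathcal{O}_Y)=1$, $K_Y$ numerically trivial), I expect to translate everything into the intersection numbers of $D$ on $X$ via Chern-class formulas for a pushforward along an étale double cover: $c_1(\mathcal{V})=\pi_*D$ (up to the ramification/canonical correction, which vanishes since $\pi$ is étale) and $c_2(\mathcal{V})$ expressible through $D^2$ and $\theta^*D$. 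Hypothesis (iii), $D^2<-4$, is what should force $\chi(2;c_1,c_2)<1$, hence $\rho_H^1<\dim M_H$, so that $W_H^1$ has strictly smaller expected dimension than $M_H$ and cannot be all of it.

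To make the containment strict rather than merely expected-dimensional, I would invoke the last clause of Theorem \ref{thm_existance_BN}: as long as $W_H^1\neq M_H$, the inclusion $W_H^2\subseteq\Sing(W_H^1)$ holds, and the determinantal structure guarantees each component of $W_H^1$ has dimension at least $\rho_H^1$; combined with the standard semicontinuity argument that a general stable bundle of these invariants has $h^0$ equal to $\max(0,\chi)$, the strictness will follow once $\chi<1$. The key computation is therefore that $\chi(\mathcal{V})\le 0$, equivalently that the generic such bundle has no sections, and here hypothesis (iii) enters decisively.

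\textbf{Main obstacle.} The crux is the Chern-class and Riemann-Roch bookkeeping for the pushforward $\pi_*\mathcal{O}_X(D)$: I must pin down $c_1$ and $c_2$ of $\mathcal{V}$ on $Y$ in terms of $D$ and $\theta^*D$ on $X$, and then show that condition (iii) yields $\chi(\mathcal{V})\le 0$ while (i) and (ii) keep $h^0(\mathcal{V})=1$, so that $\mathcal{V}$ is genuinely a special (non-generic) point where sections jump. Checking that a \emph{general} member of the moduli component truly has $h^0=0$ — rather than all members sharing the section of $\mathcal{V}$ — is the delicate part, and I would handle it by the dimension comparison $\rho_H^1<\dim M_H$ together with the determinantal description of $W_H^1$, ruling out $W_H^1=M_H$.
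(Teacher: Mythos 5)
Your non-emptiness argument is correct and coincides with the paper's: hypothesis (ii) applied to $D'=D$ gives $\theta^*D\not\sim D$, so Corollary \ref{cor_pushforward_stable} makes $\mathcal{V}=\pi_*\mathcal{O}_X(D)$ stable for every $H$, and finiteness of $\pi$ gives $h^0(\mathcal{V})=h^0(\mathcal{O}_X(D))=1$, so $[\mathcal{V}]\in W^1_H(2;c_1,c_2)$.

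The proper-containment argument, however, has a genuine gap. Your central claim is that $\rho^1_H(2;c_1,c_2)<\dim M_H$ (equivalently $\chi<1$, which hypothesis (iii) does ensure) forces $W^1_H\subsetneq M_H$. That implication is not provided by Theorem \ref{thm_existance_BN} and is false as a general principle: the determinantal structure gives only a \emph{lower} bound $\rho^1_H$ on the dimension of each non-empty component of $W^1_H$, never an upper bound, so nothing prevents $W^1_H$ from exceeding its expected dimension, and in particular nothing prevents $W^1_H=M_H$ even when $\rho^1_H<\dim M_H$. Likewise, the assertion that ``a general stable bundle with these invariants has $h^0=\max(0,\chi)$'' is not a standard semicontinuity fact; it is precisely the statement that must be proven (it is the entire content of a non-trivial Brill--Noether result), so invoking it is circular. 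Semicontinuity only tells you that $h^0\le 1$ generically near $[\mathcal{V}]$; ruling out that the generic bundle still has one section is the hard part, and this is where the paper works. The paper argues by contradiction: if $W^1_H=M_H$, then a generic bundle $\mathcal{E}$ in the component of $[\mathcal{V}]$ has exactly one section, whose zero locus a priori has a divisorial part $C$. Hypotheses (i) and (ii) are then used a \emph{second} time (not merely for stability) to force $C=0$: by semicontinuity and the projection formula $h^0(\mathcal{O}_X(D-\pi^*C))=1$, so $\pi^*C$ would be a $\theta^*$-invariant nonzero effective subdivisor of $D$, contradicting (ii). With $C=0$, the generic bundles inject into the projective bundle $\mathcal{P}$ of extensions $0\to\mathcal{O}_Y\to\mathcal{E}\to\mathcal{I}_Z(c_1)\to 0$ over $\Hilb_{c_2}(Y)$, whose dimension is bounded above by $3c_2-c_1^2/2-1$, while Theorem \ref{thm_mh} bounds the component's dimension below by $4c_2-c_1^2-3$; combining the two yields $c_2-c_1^2/2\le 2$, contradicting $c_2-c_1^2/2=-D^2/2>2$ from (iii). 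Your Chern-class bookkeeping instinct ($c_1^2=[\pi_*D]^2$, $c_2$ expressed via $D^2$) is correct and is indeed the final step of the paper's proof, but without the extension-space parameter count there is no mechanism in your proposal to derive a contradiction from $W^1_H=M_H$.
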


\begin{proof}

First of all, condition $(ii)$ implies that $\theta^{*}D\not\sim D$ and therefore, using Corollary \ref{cor_pushforward_stable}, we get that $\mathcal{V}=\pi_{*}\mathcal{O}_X(D)$ is $H$-stable. We also know that $h^0(\mathcal{V})=h^0(\mathcal{O}_X(D))=1$ since $\pi$ is a finite map. Then, by definition, $[\mathcal{V}]\in W_H^1(2;c_1,c_2)$, ensuring us about the non-emptiness of this Brill-Noether locus. 

Secondly, let us prove that  $W_H^1(2;c_1,c_2)\subsetneq M_H(2;c_1,c_2)$. Suppose by way of contradiction that $W_H^1(2;c_1,c_2)=M_H(2;c_1,c_2)$ and let $M_0$ be the irreducible component of the moduli space $M_H(2;c_1,c_2)$ containing $[\mathcal{V}]$. Since $h^0(\mathcal{V})=1$, there exists an open subset $U\subseteq M_0$ such that $h^0(\mathcal{E}) = 1$ for any $[\mathcal{E}] \in U$.
Note that for any $[\mathcal{E}] \in U$, the vanishing locus $V(s)$ of the unique nonzero section $s\in H^0(\mathcal{E})$  has a divisorial part $C_{\mathcal{E}}$, which may be zero, and a zero-dimensional part $Z_{\mathcal{E}}$. Since the Picard group of $Y$ is discrete and $h^0(C_\mathcal{E}) = 1$ for any $[\mathcal{E}]\in U$, we may take an open subset $U'\subseteq U$ such that $C_{\mathcal{E}}=C$ for any $[\mathcal{E}]\in U'$.

\medskip

\textbf{Claim 1: $C=0$}.
 
\textit{Proof of Claim 1:} 
Any vector bundle $\mathcal{E}$ whose class belongs to $U'$ sits in an exact sequence of the form: 
\begin{equation*}
            0\rightarrow \mathcal{O}_Y(C) \rightarrow \mathcal{E} \rightarrow \mathcal{I}_{Z_\mathcal{E}}(c_1-C) \rightarrow 0
\end{equation*}
Note that $h^0(\mathcal{E} \otimes \mathcal{O}_Y(-C)) = 1$. As a consequence, since $h^0(\mathcal{V}) = 1$, we get that $h^0(\mathcal{V} \otimes \mathcal{O}_Y(-C)) = 1$. Now from the projection formula:
    \begin{align*}
    \begin{split}
        1 = h^0(\mathcal{V} \otimes \mathcal{O}_Y(-C)) &= h^0(\pi_{\ast}(\mathcal{O}_X(D) \otimes \pi^{\ast}\mathcal{O}_Y(-C))) \\
         &= h^0(\mathcal{O}_X(D) \otimes \pi^{\ast}\mathcal{O}_Y(-C)) \\
         &= h^0(\mathcal{O}_X(D - \pi^{\ast}C))
    \end{split}
    \end{align*}
Consequently, $\pi^{\ast}C$ is an effective subdivisor of $D$. However, $\pi^{\ast}C$ has to be invariant under the action of $\theta^{\ast}$, by Lemma \ref{lemma_image_of_pi_star}, but having in mind condition $(ii)$, this is not possible unless $\pi^{\ast}C = 0$ and therefore $C = 0$.
\hfill $\Box$

Now let $\mathcal{P}$ be the projective bundle over $\Hilb_{c_2}(Y)$, whose fiber over the point $Z\in \Hilb_{c_2}(Y)$ is the space of weak isomorphism classes (see \cite[p. 31]{book_friedman_1998}) of extensions of type:
\begin{equation}
\label{seq1}
            0\rightarrow \mathcal{O}_Y \rightarrow \mathcal{E} \rightarrow \mathcal{I}_{Z}(c_1) \rightarrow 0
\end{equation}

Since $h^0(\mathcal{E}) = 1$ for any $[\mathcal{E}]\in U'$, there is a well-defined injective map $U'\rightarrow\mathcal{P}$ sending $[\mathcal{E}]$ to the class of the extension (\ref{seq1}). In particular,  $\dim U'\leq \dim \mathcal{P}$.

\medskip

\textbf{Claim 2}: 
                      $\dim \mathcal{P} =  3c_2 - c_1^2/2 - 2$.

\textit{Proof of Claim 2:} 
Let us take a general point $[\mathcal{E}]$ in $U'$ and $Z\in \Hilb_{c_2}(Y)$ the vanishing locus of the unique section of $\mathcal{E}$. Let us also denote $\mathcal{J} = \mathcal{I}_Z(c_1)\otimes K_Y$. After twisting the exact sequence
\begin{align*}
    0 \rightarrow \mathcal{I}_Z \rightarrow \mathcal{O}_Y \rightarrow \mathcal{O}_Z \rightarrow 0
\end{align*}
by $K_Y(c_1)$ we get that
\begin{align*}
    \chi(\mathcal{J}) = \chi(K_Y(c_1)) - \chi(\mathcal{O}_Z)
\end{align*}
Consequently,
\begin{align} \label{eq_h1}
    h^1(\mathcal{J}) = h^0(\mathcal{J}) + h^2(\mathcal{J}) - \frac{c_1^2}{2} - 1 + c_2
\end{align}

Note that $h^0(\mathcal{J}) = 0$. Otherwise, denoting by $D_Y$ an effective divisor representing $c_1 + K_Y$, we would have an injective map $\mathcal{I}_{D_Y}  = \mathcal{O}_Y(-D_Y) \hookrightarrow \mathcal{I}_Z$, and thus an inclusion $\operatorname{Supp}(Z)\subseteq \operatorname{Supp}(D_Y)$, which is false, since $\mathcal{E}$ is general, hence so is $Z$ in $\Hilb_{c_2}(Y)$.

We also notice that, since $\mathcal{E}^\vee$ is $H$-stable of negative slope, by Serre duality we get $h^2(\mathcal{E}\otimes K_Y) = h^0(\mathcal{E^\vee}) = 0$. Thus, using the exact sequence 
\begin{align*}
0 \rightarrow K_Y \rightarrow \mathcal{E} \otimes K_Y \rightarrow I_Z(c_1) \otimes K_Y \rightarrow 0
\end{align*}
we obtain the vanishing of $h^2(\mathcal{J})$. Therefore, formula (\ref{eq_h1}) ensures us that:
\begin{align*}
    h^1(\mathcal{J}) = c_2 - \frac{c_1^2}{2} - 1
\end{align*}
Now, the claim follows since $\dim \mathcal{P} = \dim \Hilb_{c_2}(Y) + \ext^1(\mathcal{I}_Z(c_1), \mathcal{O}_Y)-1 = 2c_2 + h^1(\mathcal{I}_Z(c_1)\otimes K_Y) - 1$.
\hfill $\Box$

On the other hand, $\dim U' = \dim U \ge 4c_2-c_1^2-3\chi(\mathcal{O}_Y) = 4c_2-c_1^2-3$, by Theorem \ref{thm_mh}. Putting everything together we obtain:
\begin{align*}
    4c_2-c_1^2-3\le \dim U\le \dim \mathcal{P} = 3c_2- c_1^2/2 - 2 
\end{align*}
which implies that $c_2-\frac{1}{2}c_1^2\leq 1$.

Finally, by \cite[Propositions 27, 28]{book_friedman_1998} one has that $c_1^2=[\pi_{*}D]^2$ and $c_2=\frac{1}{2}([\pi_{*}D]^2-\pi_{*}(D^2))=\frac{1}{2}[\pi_{*}D]^2-\frac{1}{2}(D^2)$. Hence we obtain that $c_2-\frac{1}{2}c_1^2=-\frac{1}{2}(D^2) \ge 2$, which is a contradiction.
\end{proof}

Note that in order to show the strict inclusion of the first Brill-Noether locus $W_H^1(2;c_1,c_2)$ in the moduli space $M_{H}(2;c_1,c_2)$, we actually proved that the irreducible component of $M_H(2;c_1,c_2)$ to which $[\mathcal{V}]$ belongs is not contained in $W_H^1(2;c_1,c_2)$. This fact raises the following question:

\begin{ques}
    Is every irreducible component of $M_H(2;c_1,c_2)$ not contained in the first Brill-Noether locus?
\end{ques}

Another natural question would concern the non-emptiness of the second Brill-Noether locus, $W_H^2(2;c_1,c_2)$. Our previous result does prove the nontriviality of the first locus if we have a bundle with certain properties, but going further to the second locus would require finding a bundle with exactly two global sections and whose pushforward has the same Chern classes as the previous one. It seems possible to find some specific bundles on the Jacobian Kummer surface with exactly two global sections, but controlling the Chern classes of the pushforwards and making sure that they are equal to the first ones is a much more difficult task.

\begin{rem} \label{rem_expected-dim}
    It is worth mentioning that the expected dimension of the Brill-Noether locus constructed in Theorem \ref{thm_main-thm} is $\rho^1_H(2;c_1,c_2) = \operatorname{dim} M_H + 1 + \frac{1}{2}(D^2)$.
\end{rem}

The relevance of the last condition (iii) of Theorem \ref{thm_main-thm} is underlined in the following example. 

\begin{exmp}
Let $D$ be a nonzero effective divisor on $X$ and let $\mathcal{V} = \pi_*\mathcal{O}_X(D)$ and $c_i = c_i(\mathcal{V})$ for $i=1,2$. Then $h^0(\mathcal{E})\ge (D^2)/2 + 2$, for any $[\mathcal{E}]\in M_H(2;c_1,c_2)$. Indeed, by Remark \ref{rem_vanishing}, $h^2(\mathcal{E}) = 0$, hence $h^0(\mathcal{E})\ge \chi(\mathcal{E}) = \chi(\mathcal{V}) = (D^2)/2+2$. Therefore, if $D$ is either a node or a trope, then $(D^2) = -2$ and so $h^0(\mathcal{E})\ge 1$ for any $[\mathcal{E}]\in M_H(2;c_1,c_2)$, which shows that $W_H^1(2;c_1,c_2) = M_H(2;c_1,c_2)$ in this case. This fact can also be seen in the virtue of Theorem \ref{thm_existance_BN} and Remark \ref{rem_expected-dim}.
\end{exmp}

\subsection{Examples} \label{subsection_examples}

As stated before, the theorem does prove the non-emptiness and nontriviality of the first Brill-Noether locus for some specific values of the Chern classes. The next task should obviously be to show that the class of divisors with the properties mentioned in the hypothesis is non-empty. In this section, we construct general examples of divisors on a Jacobian Kummer surface $X$ satisfying the conditions of Theorem \ref{thm_main-thm}.

\begin{prop}
\label{prop_ex1}
    Let $D = \sum_{i=1}^n E_i$, with the $E_i$'s not necessarily distinct nodes. Then:
    \begin{itemize}
        \item[(i)] $h^0(\mathcal{O}_X(D)) = 1$;
        \item[(ii)] for any nonzero effective subdivisor $D'\subseteq D$ one has $\theta^*D'\not\sim D'$;
        \item[(iii)] $(D^2) \le -4$ if and only if $n \geq 2$.
    \end{itemize}
\end{prop}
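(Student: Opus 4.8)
The plan is to verify the three conditions separately, using throughout that the nodes $E_0, E_{ij}$ are $16$ mutually disjoint $(-2)$-curves, so that distinct nodes meet in $0$ and each node $E$ satisfies $E^2 = -2$.

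For (i) I would induct on $n$ by restricting to a single node. Grouping equal nodes, write $D = \sum_j a_j F_j$ with the $F_j$ the distinct nodes occurring and $a_j \ge 1$ their multiplicities, pick one $F_1$, and use
\[ 0 \to \mathcal{O}_X(D - F_1) \to \mathcal{O}_X(D) \to \mathcal{O}_X(D)|_{F_1} \to 0. \]
By disjointness $D \cdot F_1 = a_1 F_1^2 = -2a_1 < 0$, so $\mathcal{O}_X(D)|_{F_1}$ has negative degree on $F_1 \cong \mathbb{P}^1$ and hence no sections; thus $h^0(\mathcal{O}_X(D)) = h^0(\mathcal{O}_X(D - F_1))$. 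Since $D - F_1$ is again a nonnegative sum of nodes with one fewer node counted with multiplicity, induction (with base case $D = 0$, where $h^0 = 1$) yields $h^0(\mathcal{O}_X(D)) = 1$.

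For (ii) I would combine this rigidity with the node/trope dichotomy recorded in the table for $\theta^*$. A nonzero effective subdivisor $D' \subseteq D$ is again a sum of nodes, so by (i) it is the unique effective divisor in its linear class. Since $\theta$ is an automorphism, $h^0(\mathcal{O}_X(\theta^* D')) = h^0(\mathcal{O}_X(D')) = 1$, so $\theta^* D'$ is likewise the unique effective member of its class. If $\theta^* D' \sim D'$ held, these two representatives would have to coincide as divisors; but $\theta^*$ carries every node to a trope, so the support of $\theta^* D'$ consists of tropes while that of $D'$ consists of nodes. As no node equals a trope, this is impossible, giving $\theta^* D' \not\sim D'$.

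For (iii) I would compute $(D)^2$ directly from disjointness: for distinct nodes $(D)^2 = \sum_i E_i^2 = -2n$, so that the inequality of condition (iii), namely $(D)^2 < -4$, holds precisely when $n \ge 3$ (allowing a repeated node only makes $(D)^2 = -2\sum_j a_j^2$ more negative). I expect the main obstacle to be (ii): everything there hinges on the rigidity from (i) together with the geometric input that nodes and tropes are genuinely distinct curves and that $\theta^*$ interchanges the two families, which is exactly what the table in Section \ref{subsection_Jacobian} supplies.
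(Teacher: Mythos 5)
Your proof is correct, and for parts (i) and (ii) it takes a genuinely different route from the paper's. For (i), the paper proves by induction that $h^1(\mathcal{O}_{D_n})=0$ using the structure-sheaf sequences $0\to\mathcal{O}_{E_n}(-D_{n-1})\to\mathcal{O}_{D_n}\to\mathcal{O}_{D_{n-1}}\to 0$, and then deduces $h^0(\mathcal{O}_X(D_n))=1$ from $0\to\mathcal{O}_X(-D_n)\to\mathcal{O}_X\to\mathcal{O}_{D_n}\to 0$ together with Serre duality $h^2(\mathcal{O}_X(-D_n))=h^0(\mathcal{O}_X(D_n))$ on the K3; your induction instead restricts $\mathcal{O}_X(D)$ to a node and kills sections by negativity of the degree $D\cdot F_1=-2a_1$, which is more elementary (no Serre duality, no cohomology of the non-reduced curve $D_n$) and equally rigorous. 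For (ii), the paper gives a one-line intersection-theoretic argument: for $E_j$ in the support, $(D'.E_j)=-2a_j<0$ while $(\theta^*D'.E_j)\ge 0$ because $\theta^*D'$ is a sum of tropes, and intersection numbers depend only on the linear equivalence class; this makes (ii) logically independent of (i). Your argument instead reuses the rigidity from (i) --- both $D'$ and $\theta^*D'$ are the unique effective members of their classes, so $\theta^*D'\sim D'$ would force $\theta^*D'=D'$ as divisors, contradicting the node/trope support dichotomy. Both are sound; the paper's is shorter, while yours needs only the qualitative fact that $\theta^*$ swaps the two families, not the nonnegativity of node--trope intersections. On (iii), you are in fact more careful than the paper: the paper asserts $(D)^2=\sum_i(E_i)^2=-2n$ ``since the nodes don't intersect,'' which is false when nodes repeat, whereas your formula $(D)^2=-2\sum_j a_j^2$ is the correct one. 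This exposes a defect of the statement itself rather than of either proof: with repeated nodes the biconditional fails (e.g.\ $D=2E_0$ has $n=2$ but $(D)^2=-8<-4$), and only the implication $n\ge 3\Rightarrow (D)^2<-4$, the direction actually needed for Theorem \ref{thm_main-thm}, holds in general. (Both you and the paper also tacitly read the statement's ``$(D)^2<4$'' as the intended ``$(D)^2<-4$''.)
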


\begin{proof}
$(i)$ We write $D_n = \sum_{i=1}^n E_i$.
We will first prove by induction on the number of terms in the sum the following claim: $h^1(\mathcal{O}_{D_n}) = 0$ for all $n \geq 1$.

For $n = 1$, consider the following exact sequence:
\begin{align*}
    0 \rightarrow \mathcal{O}_X(-E_1) \rightarrow \mathcal{O}_X \rightarrow \mathcal{O}_{E_1} \rightarrow 0
\end{align*}
Since $E_1$ is a $(-2)$-curve on a K3 surface, we know it is smooth and rational, therefore $\mathcal{O}_{E_1} \simeq \mathcal{O}_{\mathbb{P}^1}$ and $h^0(\mathcal{O}_{E_1}) = 1$, $h^1(\mathcal{O}_{E_1}) = h^2(\mathcal{O}_{E_1}) = 0$. Since $D_1 = E_1$, we obtain what we wanted.

Suppose now that $n \geq 2$ and that the previous claim is true for $n-1$. In this case, $D_n = E_n + D_{n-1}$. Consider the exact sequence:
\begin{align*}
    0 \rightarrow \mathcal{O}_{E_n}(-D_{n-1}) \rightarrow \mathcal{O}_{D_n} \rightarrow \mathcal{O}_{D_{n-1}} \rightarrow 0
\end{align*}
Again, $E_n \simeq \mathbb{P}^1$ and $\mathcal{O}_{E_n}(-D_{n-1}) \simeq \mathcal{O}_{\mathbb{P}^1}(-E_n \cdot D_{n-1})$. We have that $-E_n \cdot D_{n-1} \geq 0$ because the product of $E_n$ with any node is either $0$ (if they are distinct) or $-2$ (if they are equal), so $h^1(\mathcal{O}_{E_n}(-D_{n-1})) = 0$ and from the long exact sequence in cohomology we obtain $h^1(\mathcal{O}_{D_n}) = h^1(\mathcal{O}_{D_{n-1}})$, which is zero by the induction hypothesis.

Now, consider the following exact sequence:
\begin{align*}
    0 \rightarrow \mathcal{O}_X(-D_n) \rightarrow \mathcal{O}_X \rightarrow \mathcal{O}_{D_n} \rightarrow 0
\end{align*}
We now know that $h^1(\mathcal{O}_{D_n}) = h^2(\mathcal{O}_{D_n}) = 0$, therefore, from the associated long exact sequence we obtain the equality $h^2(\mathcal{O}_X(-D_n)) = h^2(\mathcal{O}_X)$. But from Serre duality $h^2(\mathcal{O}_X(-D_n)) = h^0(\mathcal{O}_X(D_n))$, and since $h^2(\mathcal{O}_X) = 1$, we obtain the desired result.

$(ii)$ For the second part, it is sufficient to prove it only for $D$, since any nonzero effective subdivisor would also be a sum of nodes and the same argument could be applied in the respective case. Write $D = \sum_{i\in I} a_i E_i$ with $a_i>0$ and $E_i$ distinct nodes. Take some $j\in I$ and simply note that $(D.E_{j})=-2a_{j}<0$, whereas $(\theta^*D.E_{j})\ge0$ (this is because, as seen in Lemma \ref{lemma_intersection_nodes_tropes} and the table below it, $\theta^*D$ will be a sum of tropes and the intersection product between a node and a trope is non-negative).\\
$(iii)$ This is immediate, since distinct nodes don't intersect and all the $E_i$'s are $-2$ curves.
\end{proof}

\begin{rem} \label{rem_sections_divisors}
The last proposition remains obviously true if we replace nodes by tropes. Notice that for $(i)$ the same argument works for sums of nodes and tropes, as long as there is no intersection between the nodes and the tropes.
\end{rem}

\begin{exmp}
According to Proposition \ref{prop_ex1}, some divisors which satisfy the conditions of Theorem \ref{thm_main-thm} could look like $E_0 + E_{12} + E_{13}$, or $3E_{23} + E_{14} + 2E_{56}$. The only thing that we should care about is having at least $2$ terms in our sum (counting multiplicities). The same idea applies for sums of tropes, as mentioned in the previous remark.
\end{exmp}

\begin{rem} \label{rem_divisorial-case}
A particularly interesting example is produced by the sum of only two nodes, for instance $D = E_0 + E_{12}$. In this case, by Theorem \ref{thm_main-thm} and Remark \ref{rem_expected-dim}, the first Brill-Noether locus $W^1_H(2;c_1,c_2)$ associated to $D$ will be a divisor in the moduli space $M_H(2;c_1,c_2)$. 
\end{rem}

For the other types of divisors mentioned in the Remark \ref{rem_sections_divisors}, we need to see how to choose some which fit the other conditions of Theorem \ref{thm_main-thm}. The last condition is the easiest to handle by far, since we know very well how the intersection product behaves. The second one is a bit trickier, and the following two results will tackle the problem of finding divisors which satisfy it.

\begin{prop}
Let $D = \sum_{i \in I} a_i E_i + T$ with the $E_i$'s distinct nodes, $a_i > 0$ and $T$ a trope such that $(E_i.T) = 0$ for all $i \in I$. Then $D \not\sim \theta^{\ast}D$ if and only if $D$ has one of the following properties:\\
a) for all $i$, $E_i \neq \theta^{\ast}T$;\\
b) there exists $i_0 \in I$ such that $E_{i_0} = \theta^{\ast}T$ and either  $a_{i_0} \geq 2$, or $a_{i_0} = 1$, but $I \setminus \{ i_0 \} \neq \emptyset$.
\end{prop}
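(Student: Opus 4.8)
The plan is to detect the (non)equivalence $D \sim \theta^*D$ through intersection numbers, using the elementary fact that linearly equivalent divisors have the same intersection number against every curve; hence to prove $D \not\sim \theta^* D$ it suffices to exhibit a single test curve on which the two classes disagree, while to prove $D\sim\theta^*D$ it will be cleanest to show the two classes are literally equal. Throughout I will use that $\theta^*$ is an involution induced by an automorphism (so it preserves the intersection form and is its own inverse) and that, by the table of Section \ref{subsection_Jacobian}, $\theta^*$ interchanges nodes and tropes. Writing $N := \theta^* T$, this means $N$ is a node and each $\theta^* E_i$ is a trope, so that $\theta^* D = \sum_{i\in I} a_i \theta^* E_i + N$ is a positive combination of tropes plus one node, mirroring the shape of $D$.

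First I would record the two intersection computations that do all the work. Since $(E_i . T) = 0$ for every $i$ and $(T.T) = -2$, we get $(D.T) = -2$, hence $(\theta^* D . N) = (\theta^* D . \theta^* T) = (D.T) = -2$ by $\theta^*$-invariance. On the other hand, for any node $E$ one has $(D.E) = -2\,(\text{coefficient of } E \text{ in } D) + (T.E)$, while $(\theta^* D . E) \ge (N.E)$ because each $\theta^* E_i$ is a trope and a trope meets a node non-negatively. These formulas, together with the hypothesis $(E_i.T)=0$ and the disjointness of distinct nodes, are the only inputs.

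For the ``if'' direction I would split according to the dichotomy (a)/(b). Under (a), $N$ does not appear among the $E_i$, so $(D.N) = (T.N) \ge 0 > -2 = (\theta^* D . N)$, and the two classes differ; this also covers the degenerate case $I = \emptyset$. Under (b) there is a unique (the $E_i$ being distinct) index $i_0$ with $E_{i_0} = N$, and I would treat its two sub-cases separately. If $a_{i_0}\ge 2$, testing against $N = E_{i_0}$ gives $(D.N) = -2a_{i_0}\le -4 \ne -2 = (\theta^* D. N)$. If instead $a_{i_0}=1$ but $I\setminus\{i_0\}\ne\emptyset$, then testing against $N$ is useless, since $(D.N) = -2 = (\theta^*D.N)$ in that case; this is the main obstacle. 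I would circumvent it by testing instead against a node $E_j$ with $j \ne i_0$: there $(D.E_j) = -2a_j < 0 \le (\theta^* D . E_j)$, using $(T.E_j) = 0$ and $(N.E_j) = (E_{i_0}.E_j) = 0$. This is the conceptual heart of the statement, namely that the extra clause $I\setminus\{i_0\}\ne\emptyset$ in (b) is precisely what supplies a second node to test against once the obvious test degenerates.

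Finally, for the ``only if'' direction I would argue by contrapositive and unwind the negation of ``(a) or (b)'': it forces the existence of $i_0$ with $E_{i_0} = N = \theta^* T$, together with $a_{i_0} = 1$ and $I = \{i_0\}$, i.e. $D = \theta^* T + T$. For such a $D$ one computes directly $\theta^* D = \theta^*\theta^* T + \theta^* T = T + \theta^* T = D$, using that $\theta^*$ is an involution, so in particular $D \sim \theta^* D$. Hence whenever $D\not\sim\theta^*D$ at least one of (a), (b) must hold, which completes the equivalence.
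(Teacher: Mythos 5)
Your proof is correct, and its overall skeleton matches the paper's: the forward implication is handled by intersection-number tests in the same case division, and the converse is the identical contrapositive computation $D = T + \theta^{*}T = \theta^{*}D$. The one real divergence is the delicate sub-case of (b) where $a_{i_0} = 1$ and $I \setminus \{i_0\} \neq \emptyset$. There the paper assumes $D \sim \theta^{*}D$, subtracts $T + \theta^{*}T$ from both sides, and concludes from the assertion that a nonzero effective sum of nodes cannot be linearly equivalent to a nonzero effective sum of tropes --- an assertion it does not justify. You instead test against a second node $E_j$ with $j \neq i_0$, getting $(D.E_j) = -2a_j < 0 \leq (\theta^{*}D.E_j)$; this is more elementary and self-contained, and it is in fact exactly the computation one would use to back up the paper's implicit claim. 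A second, smaller difference: in case (a) you test against the node $\theta^{*}T$ and need only $(T.\theta^{*}T) \geq 0$ (automatic for distinct irreducible curves), whereas the paper tests against $T$ and asserts the exact value $(D.\theta^{*}T) = 0$, which silently uses that each trope is disjoint from its $\theta$-partner node (true, but only by inspecting the table and Lemma \ref{lemma_intersection_nodes_tropes}). Your route avoids that dependence, and you also explicitly cover the degenerate case $I = \emptyset$, which the paper passes over.
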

\begin{proof} Let us first remark that $(D.T) = -2$.\\
$"\Leftarrow"$ If a) holds, then $(\theta^{\ast}D.T) = (D.\theta^{\ast}T) = 0$, so they can not be equivalent. If b) holds, then $(\theta^{\ast}D.T) = -2a_i$. In the first situation, we will again obtain that $(\theta^{\ast}D.T) \neq (D.T)$. In the second one, assume by contradiction that $D \sim \theta^{\ast}D$. Then $D -(T + \theta^{\ast}T) \sim \theta^{\ast}D -(T + \theta^{\ast}T)$. But the left hand side contains only nodes and the right hand side only tropes, and both are nonzero due to our hypothesis, therefore we arrive at a contradiction, so $D \not\sim \theta^{\ast}D$.\\
$"\Rightarrow"$ Suppose there exists $i_0 \in I$ such that $E_{i_0} = \theta^{\ast}T$. Also, suppose that $a_{i_0} = 1$. If $I = \{ i_0 \}$, then $D = T + \theta^{\ast}T = \theta^{\ast}D$ and we obtain a contradiction.
\end{proof}

\begin{exmp}
Let us first consider the divisor $D = E_{12} + T_{3} = \theta^{\ast}T_3 + T_3$. We know that the node and the trope don't intersect, but it is clear that $\theta^{\ast}D = D$. But if we had instead $D_1 = 2E_{12} + T_{3}$ or $D_2 = E_{12} + E_{14} + T_{3}$, the previous proposition implies that $D_1$ and $D_2$ are not $\theta^{\ast}$-invariant.
\end{exmp}

\begin{cor} \label{cor_theta_inv}
Let $D = \sum_{i \in I} a_i E_i + T$ with the $E_i$'s distinct, $a_i > 0$ and $T$ a trope such that $(E_i.T) = 0$ for all $i \in I$. Then $D$ has no $\theta^{\ast}$-invariant nonzero effective subdivisor if and only if for all $i$, $E_i \neq \theta^{\ast}T$.
\end{cor}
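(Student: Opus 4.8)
The plan is to prove both implications by testing linear equivalence against intersection numbers with the special $(-2)$-curves, exactly in the spirit of Propositions \ref{prop_ex1} and \ref{prop_ex2}. The starting observation is that any nonzero effective subdivisor of $D$ has the shape $D' = \sum_{i\in I} b_i E_i + cT$ with $0\le b_i\le a_i$ and $c\in\{0,1\}$ (since $T$ occurs in $D$ with coefficient $1$), not all coefficients zero, and that $\theta^{*}$ interchanges nodes and tropes, so $\theta^{*}T$ is a node and each $\theta^{*}E_i$ is a trope. Throughout I would use that intersection numbers are invariant under linear equivalence and that $(\theta^{*}A.B)=(A.\theta^{*}B)$, which holds because $\theta$ is an involution.

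For the implication $(\Leftarrow)$, assuming $E_i\neq\theta^{*}T$ for all $i$, I would show that no such $D'$ can satisfy $\theta^{*}D'\sim D'$, splitting into two cases according to the coefficient $c$. If $c=0$, then $D'$ is a nonzero sum of nodes and the argument of Proposition \ref{prop_ex1}(ii) applies verbatim: choosing $j$ with $b_j>0$ gives $(D'.E_j)=-2b_j<0$, whereas $(\theta^{*}D'.E_j)=(D'.\theta^{*}E_j)\ge 0$ because $\theta^{*}E_j$ is a trope and all the $E_i$ are nodes, so the two numbers differ. If $c=1$, I would instead intersect with $T$: since $(E_i.T)=0$ by hypothesis and $(T.T)=-2$, one has $(D'.T)=-2$, while $(\theta^{*}D'.T)=(D'.\theta^{*}T)=(T.\theta^{*}T)$, the vanishing of the node contributions following from $E_i\neq\theta^{*}T$ (distinct nodes are disjoint). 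As $(T.\theta^{*}T)\ge 0\neq -2$, again $\theta^{*}D'\not\sim D'$.

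For the converse $(\Rightarrow)$, I would prove the contrapositive: if $E_{i_0}=\theta^{*}T$ for some $i_0\in I$, then $D'=T+E_{i_0}$ is a nonzero effective subdivisor of $D$ (indeed $D'\le D$ since $a_{i_0}\ge 1$ and $T$ occurs with coefficient $1$), and $\theta^{*}D'=\theta^{*}T+\theta^{*}E_{i_0}=E_{i_0}+T=D'$, so it is a genuine $\theta^{*}$-invariant subdivisor. Thus the failure of the node condition immediately produces an invariant subdivisor, which closes the equivalence.

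The computation is routine once the two cases are set up; the single point that requires care is the case $c=1$, where I must be certain that intersecting with $T$ actually separates $D'$ from $\theta^{*}D'$. This reduces to the elementary but essential fact that $(T.\theta^{*}T)\ge 0$ — equivalently $(T.\theta^{*}T)\neq -2$ — which holds because $T$ and $\theta^{*}T$ are distinct irreducible curves (in fact $(T.\theta^{*}T)=0$ for every trope, as one verifies directly from the intersection table of Lemma \ref{lemma_intersection_nodes_tropes}, a reflection of $\theta$ being fixed-point-free). I expect this to be the main, and essentially only, obstacle, and it is a mild one.
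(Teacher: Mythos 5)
Your proof is correct and follows essentially the same route as the paper's: a case split on whether the subdivisor contains $T$, intersection-number comparisons against a node and against $T$ (the paper delegates the ``$T$ plus nodes'' case to Proposition \ref{prop_ex2}, which you reprove inline), and the explicit invariant subdivisor $T+\theta^{*}T$ for the converse. One cosmetic caveat: your parenthetical attributing $(T.\theta^{*}T)=0$ to fixed-point-freeness is not a valid implication in general, but your argument never needs it --- distinctness of the irreducible curves $T$ and $\theta^{*}T$ already gives $(T.\theta^{*}T)\ge 0$, which is all you use.
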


\begin{proof}
Let us first remark that in this case we have three types of nonzero effective subdivisors: $T$, some sum of nodes or $T$ plus some nodes. We can ignore the first two cases since those subdivisors are clearly not $\theta^{\ast}$-invariant due to how $\theta^{\ast}$ turns nodes into tropes and vice versa. Therefore, the only subdivisors which are relevant for us are are those from the last category and we will only focus on them.\\
$"\Rightarrow"$ In this case, $D$ is not $\theta^{\ast}$-invariant and we know its structure via the previous proposition. Assume by contradiction that we are in the second case, \textit{i.e.} there exists $i_0 \in I$ such that $E_{i_0} = \theta^{\ast}T$. But then $D$ would have a nonzero $\theta^{\ast}$-invariant effective subdivisor, namely $\theta^{\ast}T + T$, and we would have a contradiction. Therefore we are in the first case, which is what we wanted.\\
$"\Leftarrow"$ Due to our initial remark, let us consider a nonzero effective subdivisor of $D$ of the form $T$ plus some nodes which appear in $D$. Since for all $i$, $E_i \neq \theta^{\ast}T$, this subdivisor is not $\theta^{\ast}$-invariant due to the previous proposition. In consequence, $D$ has no $\theta^{\ast}$-invariant nonzero effective subdivisor, which is what we wanted to prove.
\end{proof}

\begin{rem} Let $D$ be as in the corollary with the property that $E_i \neq \theta^{\ast}T$ for all $i \in I$. Then, because of the Corollary \ref{cor_theta_inv} and the Remark \ref{rem_sections_divisors}, we know that $D$ satisfies the first two conditions of Theorem \ref{thm_main-thm}. For the last one, notice that $(D^2) = \sum_{i \in I} a_i^2 (E_i^2) + (T^2) = (-2) \cdot (\sum_{i \in I} a_i^2 + 1)$, so the third condition is also fulfilled.

\end{rem}

We gather all that was discussed previously in the following proposition which shows that we have a second family of divisors on the Jacobian Kummer surface satisfying the hypotheses of our main theorem:

\begin{prop} \label{prop_ex2}
    Let $D = \sum_{i \in I} a_i E_i + T$ with the $E_i$'s distinct, $a_i > 0$ and $T$ a trope such that $(E_i.T) = 0$ for all $i \in I$. Assume furthermore that for all $i$, $E_i \neq \theta^{\ast}T$ . Then:
    \begin{itemize}
        \item[(i)] $h^0(\mathcal{O}_X(D)) = 1$;
        \item[(ii)] for any nonzero effective subdivisor $D'\subseteq D$ one has $\theta^*D'\not\sim D'$;
        \item[(iii)] $(D^2) \le -4$.
    \end{itemize} 
\end{prop}

We finish by mentioning that the divisors described in Propositions \ref{prop_ex1} and \ref{prop_ex2} are not the only ones satisfying the hypothesis of Theorem \ref{thm_main-thm}, as the following specific example shows:

\begin{exmp}
Let us consider the divisor $D = E_0 + E_{12} + T_{136} + T_{146}$. As we can see from Lemma \ref{lemma_intersection_nodes_tropes}, the nodes and the tropes are mutually disjoint. Thus, as mentioned in Remark \ref{rem_sections_divisors}, we will have that $h^0(\mathcal{O}_X(D)) = 1$. Note that $D$ is not $\theta^*$-invariant, because $(E_0.D) = -2$, whereas $(E_0.\theta^{\ast}D) = (E_0.T_{456} + T_3 + E_{26} + E_{15}) = 1$ (see Lemma \ref{lemma_intersection_nodes_tropes} and the table describing the action of $\theta^*$ on the nodes and the tropes). With Propositions \ref{prop_ex1} and \ref{prop_ex2} in mind, we also see that $D$ has no $\theta^{\ast}$-invariant proper subdivisors. Since $(D^2) = -16 \le -4$, we conclude that $D$ satisfies the hypothesis of Theorem \ref{thm_main-thm}.
\end{exmp}

\end{document}